\documentclass[12pt]{amsart}

\usepackage[margin=1.15in]{geometry}
\usepackage{amscd,amssymb, amsmath,amsfonts, wasysym, mathrsfs, mathtools,enumerate,hhline,color}
\usepackage{graphicx}
\usepackage[all, cmtip]{xy}

\usepackage{url}

\definecolor{hot}{RGB}{65,105,225}

\usepackage[pagebackref=true,colorlinks=true, linkcolor=hot ,  citecolor=hot, urlcolor=hot]{hyperref}

\stepcounter{section}

\theoremstyle{plain}
\newtheorem{theorem}{Theorem}[section]

\newtheorem{lemma}[theorem]{Lemma}
\newtheorem{thrm}[theorem]{Theorem}

\theoremstyle{definition}

\newtheorem{rmk}[theorem]{Remark}

\newtheorem*{ex*}{Example}

\def\bR{\mathbf{R}}

\newcommand\cc{{\mathbb{C}}}




\DeclareMathOperator{\codim}{codim}              
                  
\DeclareMathOperator{\id}{id}                    

\def\ra{\rightarrow}

\def\bC{\mathbb{C}}

\def\cH{\mathcal{H}}

\def\cO{\mathcal{O}}

\def\cL{\mathcal{L}}
\def\bH{\mathbb{H}}
\def\bZ{\mathbb{Z}}
\def\bW{\mathbf{W}}
\def\bV{\mathbf{V}}
\def\bM{\mathbf{M}}
\def\eps{\epsilon}

\newcommand{\ubul}{{\,\begin{picture}(-1,1)(-1,-3)\circle*{2}\end{picture}\ }}

\title[Rank one local systems and forms of degree one]{Rank one local systems and forms of degree one}
\begin{document}
\author{Nero Budur}
\email{Nero.Budur@wis.kuleuven.be}
\address{KU Leuven, Department of Mathematics,
Celestijnenlaan 200B, B-3001 Leuven, Belgium}

\author{Botong Wang}
\email{bwang3@nd.edu}
\address{University of Notre Dame, Department of Mathematics, 255 Hurley Hall, IN 46556, USA} 

\author{Youngho Yoon}
\email{mathyyoon@ibs.re.kr}
\address{Center for Geometry and Physics, Institute for Basic Science (IBS), Pohang 790-784, Republic of Korea}


\keywords{Local systems, cohomology jump loci, 1-forms.}
\thanks{The first author was partly sponsored by NSA and FWO}
\thanks{The third author was supported by IBS-R003-G2.}

\begin{abstract}
Cohomology support loci of rank one local systems of a smooth quasi-projective complex algebraic variety are finite unions of torsion-translated complex subtori of the character variety of the fundamental group. Tangent spaces of the character variety are (partially) represented by logarithmic 1-forms. In this paper, we give a relation between cohomology support loci and the natural strata of 1-forms given by the dimension of the vanishing locus. This relation generalizes the one for the projective case due to Green and Lazarsfeld and also generalizes the partial relation due to Dimca in the quasi-projective case.
\end{abstract}

\maketitle

\section*{}

In this note we consider the relation between the cohomological properties of rank one local systems on a quasi-projective compex algebraic variety and the zero loci of logarithmic 1-forms. There are two distinct types of logarithmic 1-forms to consider, according to the divisor along which the poles are allowed: simple normal crossings divisor, or not. In this paper we deal with the first case.

For a smooth quasi-projective complex algebraic variety $U$, the space of rank one local systems is identified with the space of characters of the fundamental group:
$$
\bM_B(U)=Hom (\pi_1(U),\bC^*)=Hom (H_1(U,\bZ),\bC^*)=H^1(U,\bC^*).
$$
This is an algebraic group consisting of finitely many copies of $(\bC^*)^{b_1(U)}$. The cohomology support loci are defined as
$$
\bV^i=\{\rho \in \bM_B(U)\mid  H^i(U,L_\rho)\ne 0 \},
$$
where $L_\rho$ is the local system given by the rank one representation $\rho$.  $\bV^i$ are finite unions of torsion-translated subtori of $\bM_B(U)$, see \cite{BW1}.

Let $X$ be a smooth projective compactification of $U$ with complement $D$ a simple normal crossing divisor. Let
$$
\bW=H^0(X,\Omega_X^1(\log D)).
$$
Denote by $Z(w)$ the degenerating locus of a section $w\in \bW$. Stratify $\bW$ according to the dimension of $Z(w)$:
$$
\bW^i=\{ w\in \bW\mid \codim_X(Z(w))\le i\}.
$$
The sets $\bW^i$ are closed in $\bW$. Indeed, $Z(w)$ are fibers of the second projection from $X\times\bW$ restricted to the correspondence $Z=\{(x,w)\mid x\in Z(w)\}$, and hence Chevalley's upper-semicontinuity theorem for dimension of fibers of a proper morphism applies.

Our goal is to understand the relation between $\bV^i$ and $\bW^i$. Note that $\bW^i$ are algebraic invariants, whereas $\bV^i$ are homotopy invariants. Let us describe the main result.

The tangent space to $\bM_B(U)$ at a representation $\rho$ is 
$$
T_\rho(\bM_B(U))=H^1(U,End(L_\rho)).
$$
Every irreducible component of the tangent cone to $\bV^i$ at a torsion representation $\rho$,
$$
TC_\rho(\bV^i)\subset H^1(U,End(L_\rho)),
$$
is a vector subspace.  Consider now the map on $\bM_B(U)$ given by translation by $\rho$. It induces an isomorphism from the tangent space at the trivial representation to $T_\rho\bM_B(U)$. We will abuse the notation and call the inverse of this isomorphism $\rho^{-1}$. Thus,
$$
\rho^{-1}T_\rho(\bM_B(U))=T_1(\bM_B(U))=H^1(U,\bC).
$$
Note that 
$$
\bW=F^1H^1(U,\bC)=H^0(X,\Omega_X^1(\log D)),
$$
where $F$ is the Hodge filtration. Placing the collection of tangent cones at torsion representations in the same single vector space, we define
$$
\bR^i=\mathop{\bigcup_{k\le i}}_{\rho\in \bM_B(U)\text{ torsion }}\rho^{-1}TC_\rho(\bV^k) \subset H^1(U,\bC)
$$
and
$$
\bR_i=\mathop{\bigcup_{k\ge i}}_{\rho\in \bM_B(U)\text{ torsion }}\rho^{-1}TC_\rho(\bV^k) \subset H^1(U,\bC).$$

\begin{thrm}\label{thrmMain}
Let $U$ be a smooth quasi-projective complex algebraic variety of dimension $n$. Let $\bR^i$ be the collection of tangent cones at torsion points to the cohomology support loci as above. Let $\bW^i$ be the strata of 1-forms, logarithmic with respect to a fixed good compactification, given by the dimensions of the zero loci. Then
$$
(\bR^i\cup \bR_{2n-i})\cap \bW\subset \bW^i.
$$
\end{thrm}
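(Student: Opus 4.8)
\emph{Proof strategy.}
The plan is to reduce the inclusion, one cohomological degree and one character at a time, to a vanishing estimate for a Koszul-type complex of logarithmic forms, and then to read off a bound on $\codim_X Z(w)$. First I would use the structure theorem of \cite{BW1}: each $\bV^k$ is a finite union of torsion-translated subtori, so every irreducible component of $\bV^k$ through a torsion point $\rho$ is a torsion translate $\rho_0 H$ of a subtorus $H\subseteq\bM_B(U)^0$, in particular smooth at $\rho$, whence $\rho^{-1}TC_\rho(\bV^k)$ is the union of the Lie algebras $\mathrm{Lie}(H)\subseteq H^1(U,\bC)$ over such components. Consequently, if $w\in\bW$ lies in $\rho^{-1}TC_\rho(\bV^k)$ then $w\in\mathrm{Lie}(H)$ for one such component, the one-parameter subgroup $t\mapsto\exp(tw)$ lies in $H$, and $\rho\exp(tw)\in\rho H=\rho_0 H\subseteq\bV^k$; thus $H^k(U,L_{\rho\exp(tw)})\ne 0$ for every $t\in\bC$. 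It therefore suffices to prove: \emph{if $w\in\bW$ and $H^k(U,L_{\rho\exp(tw)})\ne 0$ for all $t$ in a Zariski-dense subset of $\bC$, then $\codim_X Z(w)\le\min(k,\,2n-k)$.} Indeed a point of $\bR^i$ lies in some $\rho^{-1}TC_\rho(\bV^k)$ with $k\le i$, giving $\codim_X Z(w)\le k\le i$; a point of $\bR_{2n-i}$ lies in some such locus with $k\ge 2n-i$, giving $\codim_X Z(w)\le 2n-k\le i$; either way $w\in\bW^i$.

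To prove the claim I would pass to the Deligne canonical extension $(\cL_\rho,\nabla_\rho)$ of $L_\rho$ across $D$: a line bundle on $X$ with a flat logarithmic connection whose residues lie in $\bQ\cap[0,1)$ because $\rho$ is torsion. Since the logarithmic Hodge--de Rham spectral sequence of $(X,D)$ degenerates, every $w\in\bW=H^0(X,\Omega^1_X(\log D))$ is $d$-closed, so $\nabla_\rho+tw\wedge$ is again a flat logarithmic connection, and by Deligne's comparison theorem $\bH^\bullet\bigl(X,(\cL_\rho\otimes\Omega^\bullet_X(\log D),\,\nabla_\rho+tw\wedge)\bigr)\cong H^\bullet(U,L_{\rho\exp(tw)})$ for all but countably many $t$ --- those for which some residue of $\nabla_\rho+tw$ is a positive integer. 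Rescaling degrees by powers of $t$ identifies this complex, for $t\ne 0$, with $(\cL_\rho\otimes\Omega^\bullet_X(\log D),\,s\nabla_\rho+w\wedge)$, $s=t^{-1}$; letting $s$ vary over $\mathbb{A}^1$ one obtains a flat, algebraically parametrized family of bounded complexes of locally free sheaves on the proper variety $X$, whose fiber over $s=0$ is the $\mathcal{O}_X$-linear complex $(\cL_\rho\otimes\Omega^\bullet_X(\log D),\,w\wedge)$. As its $k$-th hypercohomology is nonzero for $s$ in a Zariski-dense subset of $\mathbb{A}^1$, upper semicontinuity of hypercohomology dimension forces $\bH^k\bigl(X,(\cL_\rho\otimes\Omega^\bullet_X(\log D),\,w\wedge)\bigr)\ne 0$.

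The last step extracts the bound on $\codim_X Z(w)$ from this nonvanishing. Choosing a local frame $e_1,\dots,e_n$ of the rank-$n$ bundle $\Omega^1_X(\log D)$ and writing $w=\sum_i f_i e_i$, the complex $(\Omega^\bullet_X(\log D),\,w\wedge)$ is locally the Koszul cochain complex on $(f_1,\dots,f_n)$, whose common zero scheme is $Z(w)$; hence all its cohomology sheaves $\sH^q$ are supported on $Z(w)$, and --- since each $\mathcal{O}_{X,x}$ is regular, so the grade of $(f_1,\dots,f_n)$ there equals its height $\codim_x Z(w)$ --- the depth-sensitivity of the Koszul complex gives $\sH^q=0$ for $q<\codim_X Z(w)$; tensoring with the line bundle $\cL_\rho$ is exact, so the same holds for $(\cL_\rho\otimes\Omega^\bullet_X(\log D),\,w\wedge)$. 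In the second hypercohomology spectral sequence $E_2^{p,q}=H^p(X,\sH^q)\Rightarrow\bH^{p+q}$ one then has $E_2^{p,q}=0$ unless $q\ge\codim_X Z(w)$ and, by Grothendieck vanishing on $\Supp\sH^q\subseteq Z(w)$, also $p\le\dim Z(w)=n-\codim_X Z(w)$; hence $\bH^m=0$ unless $\codim_X Z(w)\le m\le 2n-\codim_X Z(w)$, i.e. $\codim_X Z(w)\le\min(m,\,2n-m)$. Taking $m=k$ proves the claim, and with it the theorem.

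The step I expect to be the main obstacle is the middle one: verifying rigorously that $H^\bullet(U,L_{\rho\exp(tw)})$ is computed by the twisted logarithmic de Rham complex for generic $t$, and that this complex degenerates, as $t\to\infty$, to the Koszul complex $(\cL_\rho\otimes\Omega^\bullet_X(\log D),\,w\wedge)$ inside a flat, proper algebraic family over $\mathbb{A}^1$ to which the semicontinuity theorem genuinely applies --- the delicate points being the residue bookkeeping in Deligne's comparison theorem and the flatness of the family over the whole affine line. The remaining ingredients --- the structure theorem for $\bV^k$, the closedness of global logarithmic $1$-forms, and the Koszul/depth estimate over the regular local rings of $X$ --- I would treat as standard.
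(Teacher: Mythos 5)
Your proposal is correct and follows essentially the same route as the paper: reduce tangent--cone membership to non-vanishing of $H^k(U,L_{\rho\exp(tw)})$ along the exponential ray, pass to the twisted logarithmic de Rham complex, degenerate it to the Koszul complex $(\cL_\rho\otimes\Omega^\bullet_X(\log D),\,w\wedge)$ via the rescaling $s=1/t$, and conclude with Koszul depth-sensitivity plus Grothendieck vanishing on $Z(w)$. The only difference is packaging: where you invoke upper semicontinuity of hypercohomology for the one-parameter family over $\mathbb{A}^1_s$, the paper runs the same degeneration through the cohomology jump loci of a two-parameter family over $\bC^2$ with differential $\eps d_\rho+tw$, using coherence of the derived pushforward, closedness of the jump loci, and invariance under the diagonal $\bC^*$-action --- which is exactly the rigorous form of the semicontinuity step you flag as the delicate point.
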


We do not have an example where equality fails to hold in Theorem \ref{thrmMain}.  

If $U=X$ is projective, the result is due to Green-Lazarfeld \cite{GrL}.  In this case $\bR^i=\bR_{2n-i}$ and $\bW=H^0(X,\Omega^1_X)$. This has been recently used by Popa-Schnell \cite{PS} to prove that, if $X$ is of general type, then every holomorphic global 1-form must vanish at some point. That is, $\bW=\bW^n$ for $n=\dim X$. 

In the quasi-projective case, a result of Dimca \cite[Theorem 6.1]{D} implies the inclusion as in Theorem \ref{thrmMain}, but without the appearance of the tangent cones at torsion representations, and with $\bW$ replaced by $H^{1,0}\cup H^{1,1}$, where
$$
\bW=F^1H^1(U,\bC)=H^{1,0}\oplus H^{1,1}.
$$ 
In particular, we allow log forms of mixed type. It was noted in {\it loc. cit.} that there are positive-dimensional components of $\bW^i$ unaccounted for by the tangent cones at the trivial representation. Our result explains this phenomenon, for the existing examples, by allowing torsion representations as well. 

The case when the 1-forms are logarithmic with respect to a divisor with worse than simple normal crossing singularities is poorly understood. This case is highly interesting from an applied point of view too, such as for maximum likelihood estimation in statistics, see \cite{HS}. There are some partial results for complements of hyperplane arrangements, e.g. \cite{CDFV}. 

The main idea of the proof, due to \cite{GrL}, is to relate a complex computing local system cohomology with a Koszul type complex. In the projective case this is facilitated by the close relation between local systems and Higgs line bundles. In the quasi-projective case, \cite{D} used results of Arapura \cite{A} on Higgs fields. In contrast to the methods of \cite{GrL} and \cite{D}, we do not use any Hodge theory. Instead, we use properties of the cohomology jump ideals developed in \cite{BW2}.

\section{Proof of Theorem \ref{thrmMain}}

We keep the notation as in Theorem \ref{thrmMain}. We shall prove the following equivalent formulation.

\begin{thrm}\label{thrm2vs} Let $w\in H^0(X,\Omega^1_X(\log D))$. Suppose that $\codim_X(Z(w))>i$. Then for any torsion $\rho\in\bM_B(U)$ and $k\le i$ or $k\ge 2n-i$,  $\exp_\rho([w])$ is not contained in $TC_\rho(\bV^k)$.
\end{thrm}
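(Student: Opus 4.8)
The plan is to reduce the geometric statement about vanishing loci of a logarithmic 1-form to an algebraic statement about cohomology jump ideals, following the strategy laid out at the end of the introduction. Let $w\in H^0(X,\Omega^1_X(\log D))$ with $\codim_X Z(w)>i$, fix a torsion character $\rho\in\bM_B(U)$ of order $d$, and let $\pi\colon \tilde U\to U$ be the associated finite abelian cover trivializing $L_\rho^{\otimes d}$ (or more precisely the cover that makes $\rho$ visible as an eigenvalue). First I would observe that the tangent cone $TC_\rho(\bV^k)$ and the exponential map $\exp_\rho$ are governed, via \cite{BW2}, by the cohomology jump \emph{ideals} of a finite-dimensional differential graded algebra model for (a direct summand of) the complex computing the cohomology of $\tilde U$; concretely, one has a CDGA $(A^\bullet,d)$ with $H^1(A)=H^1(U,\bC)$ and a deformation $d+\epsilon w$ whose cohomology jump ideals cut out the cohomology support loci near $\rho$, and whose tangent cone is read off from the leading (linear in $w$) term. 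So the claim ``$\exp_\rho([w])\notin TC_\rho(\bV^k)$'' becomes: \emph{the Aomoto-type complex $(H^\bullet(A),\ w\wedge -)$, suitably twisted by $\rho$, is exact in degree $k$.}

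**The Koszul/Aomoto computation.** The heart of the argument is then a local computation on $X$. Wedging with $w$ makes $(\Omega^\bullet_X(\log D),\ w\wedge -)$ into a Koszul-type complex whose cohomology is supported on $Z(w)$. The key input is that if $\codim_X Z(w)>i$ then this complex is exact in a range of degrees near $0$ and near $2n$ — this is precisely the Green--Lazarsfeld mechanism \cite{GrL}, and I would prove it by a standard Koszul-complex regularity argument: $w$, viewed locally as a section of a rank-$n$ bundle, has zero scheme of codimension $>i$, so the Koszul complex it defines has no cohomology in the first $i$ spots (and dually, by the self-duality of the Koszul complex twisted by $\det$, none in the last $i$ spots, which is where the $k\ge 2n-i$ case comes from — here $n=\dim U=\dim X$ and the shift by $2n$ reflects Poincaré--Lefschetz duality between $H^k$ and $H^{2n-k}_c$). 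Passing from the sheaf-level statement to the statement about $H^\bullet(A)$ requires a hypercohomology spectral sequence argument, controlling how the sheaf-level exactness propagates; the twist by the torsion character $\rho$ enters by replacing $\Omega^\bullet_X(\log D)$ with the corresponding summand $(\Omega^\bullet_X(\log D)\otimes L,\ w\wedge-)$ for a unitary line bundle $L$ with $L^{\otimes d}\cong\cO_X$ pulled back appropriately, exactly as in \cite{D,A}, except that we extract what we need purely from the cohomology jump ideal formalism of \cite{BW2} rather than from Hodge theory.

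**Combining the pieces.** With the exactness of the twisted Aomoto complex in degree $k$ in hand for $k\le i$ and $k\ge 2n-i$, the characterization of tangent cones via first-order jump ideals from \cite{BW2} gives that the corresponding linear functional (the class of $w$) does not lie in $TC_\rho(\bV^k)$, which is the desired conclusion; unwinding the abuse of notation $\rho^{-1}$ and $\exp_\rho$ then yields the stated form, and taking the union over all torsion $\rho$ and all $k$ in the two ranges recovers $(\bR^i\cup\bR_{2n-i})\cap\bW\subset\bW^i$ as in Theorem \ref{thrmMain}.

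**Main obstacle.** I expect the principal difficulty to be the transition from the local Koszul exactness on $X$ to the exactness of the \emph{global} twisted Aomoto complex on $H^\bullet(U,\bC)$ (resp. its $\rho$-twist), i.e. controlling the hypercohomology spectral sequence so that higher-degree sheaf cohomology does not contaminate the low-degree (and high-degree) vanishing — in \cite{GrL} this is where projectivity and Hodge-theoretic degeneration do the work, and the technical novelty here is to run the argument instead through the finiteness and functoriality properties of the cohomology jump ideals of \cite{BW2}, and to verify that the first-order term of the jump ideal really is computed by $w\wedge-$ on the $E_1$-page. A secondary subtlety is bookkeeping the duality that produces the $k\ge 2n-i$ range for a possibly non-compact $U$, where one must be careful that the relevant duality is between $L_\rho$-cohomology and $L_{\rho^{-1}}$-cohomology with compact supports and that it is compatible with the $\exp_\rho$ normalization.
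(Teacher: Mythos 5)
There is a genuine gap, and it sits exactly where you flag your ``main obstacle'': you never supply the bridge from the complex that actually governs $TC_\rho(\bV^k)$ to the Koszul complex $(\Omega_X^\bullet(\log D)\otimes_{\cO_X}\cL_\rho,\,w\wedge-)$. The paper's reduction does not go through a CDGA model or an Aomoto complex on $H^\bullet$ at all (identifying the tangent cone with exactness of $(H^\bullet,\,[w]\cup-)$ is a resonance-variety statement that would itself need a formality-type input you don't have). Instead, since $\bV^k$ is a finite union of torsion-translated subtori, $\exp_\rho([w])\in TC_\rho(\bV^k)$ forces the whole curve $\rho\exp(t[w])$ to lie in $\bV^k$, so it suffices to prove $\bH^k\bigl(X,(\Omega_X^\bullet(\log D)\otimes_{\cO_X}\cL_\rho,\,d_\rho+tw)\bigr)=0$ for generic $t$. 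The passage from this twisted de Rham complex to the Koszul complex --- the step you leave open --- is done by forming the two-parameter family $N^\bullet=(\Omega_X^\bullet(\log D)\otimes_{\cO_X}\cL_\rho\otimes_\bC\bC[\eps,t],\,\eps d_\rho+tw)$ on $X\times\bC^2$, checking $\bR p_*N^\bullet$ has coherent cohomology, and observing that the jump locus $\bV^k(\bR p_*N^\bullet)\subset\bC^2$ is closed and invariant under the diagonal $\bC^*$-action; since the orbit of the line $\eps=1$ is dense, nonvanishing along $\eps=1$ would force the jump locus to contain $(\eps,t)=(0,1)$, where the differential degenerates to $w\wedge-$. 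Without this (or some substitute such as the Hodge-theoretic degeneration you explicitly want to avoid), your argument does not close.

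A second, concrete error is your mechanism for the range $k\ge 2n-i$. The sheaf complex $(\Omega_X^\bullet(\log D)\otimes_{\cO_X}\cL_\rho,\,w\wedge-)$ is \emph{not} exact near the top: Koszul self-duality is precisely what yields the low-degree exactness $\cH^q=0$ for $q\le i<\codim_X Z(w)$, and e.g.\ $\cH^n$ is a nonzero sheaf supported on $Z(w)$ whenever $Z(w)\ne\emptyset$. Invoking Poincar\'e--Lefschetz duality with compact supports on the open $U$ also does not match the object at hand. The correct source of the high-degree vanishing is dimensional: every $\cH^q$ is supported on $Z(w)$ with $\dim Z(w)<n-i$, so $H^p(X,\cH^q)=0$ for $p\ge n-i$ by Grothendieck vanishing, and in the hypercohomology spectral sequence $p+q=k\ge 2n-i$ with $q\le n$ forces $p\ge n-i$. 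You should replace the self-duality claim by this support argument.
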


Here $\exp_\rho$ is the isomorphism between $H^1(U,\bC)$ and $T_\rho(\bM_B(U))$, and $[w]$ denotes the class in $H^1(U,\bC)$. 

Let $$(\Omega_X^\ubul(\log D)\otimes_{\cO_X}\cL_\rho,d_\rho)$$ be the logarithmic de Rham complex of the local system $L_\rho$, where $\cL_\rho$ is the canonical Deligne extension. It suffices to prove that 
$$\bH^k(X,(\Omega_X^\ubul(\log D)\otimes_{\cO_X}\cL_\rho,d_\rho +tw))=0$$
for general $t\in\bC$, for $k\le i$ and for $k\ge 2n-i$, where from now $w$ is viewed as form with values in $End(\cL_\rho)\cong \cO_X$.

Let 
$$
N^\ubul=(\Omega_X^\ubul(\log D)\otimes_{\cO_X}\cL_\rho\otimes_\bC \bC[\eps,t],\eps d_\rho +tw).
$$
This is a complex of coherent $\cO_{X\times\bC^2}$-modules with differential only $\cO_{\bC^2}$-linear. Denote the projection $X\times\bC^2\ra \bC^2$ by $p$.

\begin{lemma}
$\bR^kp_*(N^\ubul)$ is a coherent sheaf on $\bC^2$ for every $k$.
\end{lemma}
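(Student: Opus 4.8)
The plan is to reduce the coherence statement to a standard application of Grauert's direct image theorem for proper morphisms, together with the observation that $p$ factors as $X\times\bC^2\to\bC^2$, which is proper since $X$ is projective. The complex $N^\ubul$ lives on $X\times\bC^2$ but its differential is only $\cO_{\bC^2}$-linear; nevertheless each term $\Omega_X^j(\log D)\otimes_{\cO_X}\cL_\rho\otimes_\bC\bC[\eps,t]$ is a coherent sheaf on $X\times\bC^2$, being the pullback of a coherent sheaf on $X$ times the coordinate ring of $\bC^2$. The hypercohomology sheaves $\bR^kp_*(N^\ubul)$ are computed by taking a bounded complex of $p$-acyclic (e.g. flasque, or relatively affine) resolutions and then applying $p_*$; the resulting complex of $\cO_{\bC^2}$-modules has coherent cohomology because Grauert's theorem (or the projective version, Serre's theorem on $R^kf_*$ of coherent sheaves under a projective morphism) guarantees that each $R^kp_*$ of a coherent sheaf on $X\times\bC^2$ is coherent on $\bC^2$, and coherence is preserved under the spectral sequence of the double complex.

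Concretely, I would argue as follows. First, replace $N^\ubul$ by a Čech resolution with respect to a finite affine open cover $\{V_\alpha\}$ of $X$ (pulled back to $X\times\bC^2$), obtaining a double complex whose $p_*$ is a double complex of $\cO_{\bC^2}$-modules. Since $X$ is projective, each $\Omega_X^j(\log D)\otimes\cL_\rho$ is coherent on $X$, so its higher direct images under the projection to $\bC^2$ are coherent (here one may even use that the cohomology of a coherent sheaf on a projective variety is finite-dimensional, so $R^qp_*$ of $\Omega_X^j(\log D)\otimes\cL_\rho\otimes\bC[\eps,t]$ is a finitely generated $\bC[\eps,t]$-module, hence coherent on $\bC^2$). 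Then the hypercohomology sheaves $\bR^kp_*(N^\ubul)$ are the cohomology of a bounded complex of coherent $\cO_{\bC^2}$-modules — namely the total complex of the double complex $p_*(\text{Čech of }N^\ubul)$ — and the category of coherent sheaves on the Noetherian scheme $\bC^2$ is abelian, so the cohomology of such a complex is again coherent. This gives the claim for every $k$.

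The main technical point to be careful about — and what I expect to be the only real obstacle — is that the differential $\eps d_\rho + tw$ is not $\cO_X$-linear, so $N^\ubul$ is not a complex of $\cO_{X\times\bC^2}$-modules with $\cO_{X\times\bC^2}$-linear differential; it is only $\cO_{\bC^2}$-linear (equivalently $p^{-1}\cO_{\bC^2}$-linear). One must therefore verify that the formation of $\bR^kp_*$ is still computed termwise by a resolution adapted to $p$ (Čech or flasque), which is fine because $\bR p_*$ only requires acyclicity of the terms for $p_*$, a condition insensitive to the $\cO_X$-module structure; and then that $p_*$ applied to each term yields a coherent $\cO_{\bC^2}$-module, which follows from properness of $p$ and coherence of the terms on $X\times\bC^2$ as sheaves of abelian groups with compatible $\cO_{\bC^2}$-action. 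Once this bookkeeping is in place, coherence of the hypercohomology follows formally from the Noetherian property of $\bC^2$ and the finiteness of the Čech complex. I do not expect any genuinely hard estimate here; the statement is a soft finiteness result, and the content of the proof is simply setting up the right complex to which Grauert's/Serre's theorem applies.
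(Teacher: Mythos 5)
Your argument is correct and is essentially the paper's: the paper invokes the first hypercohomology spectral sequence $E_1^{pq}=\bR^qp_*(N^p)\Rightarrow \bR^{p+q}p_*(N^\ubul)$, noting that its maps are $\cO_{\bC^2}$-linear and that each $\bR^qp_*(N^p)$ is coherent since $p$ is projective and $N^p$ is coherent on $X\times\bC^2$. Your \v{C}ech double complex is just an explicit model for that spectral sequence, and your care about the differential being only $\cO_{\bC^2}$-linear matches the paper's remark to the same effect.
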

\begin{proof}
There is a spectral sequence, essentially same as the first hypercohomology spectral sequence, with the maps in the spectral sequence $\cO_{\bC^2}$-linear,
$$
E^{pq}_1=\bR^qp_*(N^p) \Rightarrow \bR^{p+q}p_*(N^\ubul).
$$
Since $p$ is projective and $N^p$ is coherent on $X\times\bC^2$, $\bR^qp_*(N^p)$ is coherent on $\bC^2$. Therefore $\bR^kp_*(N^\ubul)$ is a coherent sheaf for every $k$.
\end{proof}

\begin{rmk}
$N^\ubul$ can be thought as a $\cc^2$ family of logarithmic $\lambda$-connections on $X$ with poles along $D$. Simpson has introduced in \cite{S} the Deligne-Hitchin twistor moduli space of logarithmic $\lambda$-connections $M_{\textrm{DH}}(X, \log D)$. $N^\ubul$ induces a map $\tau: \cc^2\to M_{\textrm{DH}}(X, \log D)$. Moreover, $N^\ubul$ is uniquely determined by the map $\tau$, because $N^\ubul$ is isomorphic to the pull-back of the universal logarithmic $\lambda$-connection on $X\times M_{\textrm{DH}}(X, \log D)$ by $\id\times \tau$. 
\end{rmk}

Since $\bR p_*(N^\ubul)$ has coherent cohomology by the lemma, one can form its cohomology support loci on $\bC^2$,
$$
\bV^i(\bR p_*(N^\ubul))\subset \bC^2.
$$
Indeed, recall from \cite[Section 2]{BW2} that for a bounded-above complex $M^\ubul$ of modules over a noetherian ring $R$, one has cohomology jump ideals $J^i_k(M^\ubul)\subset R$ defined as soon as $M^\ubul$ has finitely generated cohomology. The cohomology jump ideals $J^i_k(M^\ubul)$ define the cohomology jump loci $\bV^i_k(M^\ubul)$ in $Spec(R)$. The cohomology jump ideals are the same if $M^\ubul$ is replaced by a quasi-isomorphic complex. Moreover, if $M^\ubul$ is a complex of flat $R$-modules, the closed points of the cohomology jump loci $\bV^i_k(M^\ubul)$ are those maximal ideals $m\subset R$ for which $$\dim_{R/m}H^i(M^\ubul\otimes_R R/m)\ge k.$$ More generally:

\begin{lemma}\label{lemBW2}\cite[Corollary 2.4]{BW2} If $M^\ubul$ is a bounded-above a complex of flat $R$-modules with finitely generated cohomology and $S$ is an noetherian $R$-algebra, then $J^i_k(M^\ubul\otimes_R S)=J^i_k(M^\ubul)\cdot S$ for all $i, k$\footnote{The formula $J^i_k(E^\ubul)\otimes_R S=J^i_k(E^\ubul\otimes_R S)$ in \cite[Corollary 2.4]{BW2} should be $J^i_k(E^\ubul)\cdot S=J^i_k(E^\ubul\otimes_R S)$. }.
\end{lemma}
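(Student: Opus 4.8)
The plan is to prove Lemma \ref{lemBW2}, i.e., base change for cohomology jump ideals. First I would recall the setup from \cite[Section 2]{BW2}: for a bounded-above complex $M^\ubul$ of $R$-modules with finitely generated cohomology, the cohomology jump ideals $J^i_k(M^\ubul)$ are defined by taking a quasi-isomorphic complex $P^\ubul$ of finitely generated projective (here free, since we may localize) $R$-modules, bounded above, and forming for each degree $i$ the complex $P^{i-1}\xrightarrow{d^{i-1}} P^i \xrightarrow{d^i} P^{i+1}$; then $J^i_k$ is, up to the appropriate shift by the ranks, the ideal generated by the minors of the matrix representing $(d^{i-1}, d^i)$ — concretely, the Fitting-type ideal measuring where $H^i$ has dimension $\ge k$. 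The crucial point is that this definition is invariant under quasi-isomorphism, so one is free to compute with any such projective resolution.

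The key step is the following. Since $M^\ubul$ is already assumed to be a complex of flat $R$-modules with finitely generated cohomology, one can choose the projective replacement $P^\ubul \to M^\ubul$ so that, after tensoring with $S$, the complex $P^\ubul \otimes_R S$ is still a complex of projective (free) $S$-modules quasi-isomorphic to $M^\ubul\otimes_R S$; flatness of $M^\ubul$ guarantees that $H^i(M^\ubul\otimes_R S)$ remains the homology of $P^\ubul\otimes_R S$ and is finitely generated over the noetherian $S$-algebra, so that $J^i_k(M^\ubul\otimes_R S)$ is computed by the minors of the same matrices, now with entries viewed in $S$ via $R\to S$. But the ideal generated by a fixed set of minors of a fixed matrix, after extension of scalars along $R\to S$, is exactly the extension $J^i_k(M^\ubul)\cdot S$ of the ideal generated by those minors in $R$. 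This identifies $J^i_k(M^\ubul\otimes_R S)$ with $J^i_k(M^\ubul)\cdot S$ and establishes the formula. I would note explicitly that this is the corrected statement: it is the \emph{extended} ideal $J^i_k(M^\ubul)\cdot S$, not the tensor product $J^i_k(M^\ubul)\otimes_R S$, since these differ in general (the natural map $J\otimes_R S \to JS$ need not be injective), which is the content of the footnote.

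The main obstacle — really the only subtlety — is justifying that flatness of the $M^i$ allows one to compute $H^i(M^\ubul \otimes_R S)$ from a projective resolution $P^\ubul$ in a way compatible with base change. One must be slightly careful: it is not automatic that a bounded-above projective resolution $P^\ubul$ of $M^\ubul$ has the property that $P^\ubul\otimes_R S \to M^\ubul\otimes_R S$ is still a quasi-isomorphism. The point is that the mapping cone $C^\ubul$ of $P^\ubul \to M^\ubul$ is acyclic and bounded above with flat terms (being built from projective modules and the flat $M^i$), hence $C^\ubul\otimes_R S$ is again acyclic — an acyclic bounded-above complex of flat modules stays acyclic after any base change, as one sees by breaking it into short exact sequences $0\to Z^i \to C^i \to Z^{i+1}\to 0$ with all $Z^i$ flat and applying right-exactness degree by degree from the top. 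This gives the quasi-isomorphism $P^\ubul\otimes_R S \simeq M^\ubul\otimes_R S$, after which the Fitting-ideal computation above goes through verbatim. Everything else is the bookkeeping of \cite[Section 2]{BW2}, and since we have invoked the statement as \cite[Corollary 2.4]{BW2}, in the paper itself we simply cite it; the discussion above is the sketch of why it holds.
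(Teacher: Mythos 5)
The paper offers no proof of this lemma beyond the citation to \cite[Corollary 2.4]{BW2}, so there is no internal argument to compare against. Your sketch --- replace $M^\ubul$ by a quasi-isomorphic bounded-above complex of finitely generated free modules, observe that the quasi-isomorphism survives $-\otimes_R S$ because its mapping cone is an acyclic bounded-above complex of flat modules (hence stays acyclic after base change, by the descending induction on the short exact sequences of flat cycle modules), and then use that the determinantal ideals defining $J^i_k$ manifestly extend to $J^i_k(M^\ubul)\cdot S$ under $R\to S$ --- is a correct rendering of the argument behind the cited result, including the point of the footnote that the answer is the extended ideal rather than $J^i_k(M^\ubul)\otimes_R S$.
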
 

Take for a representant of $\bR p_*(N^\ubul)$ a complex of flat $\cO_{\bC^2}$-modules. Hence, by base change (the derived categorical version of \cite[III, Proposition 9.3]{H}), $
\bV^k(\bR p_*(N^\ubul))$ consists of the closed points $(\eps,t)\in\bC^2$ for which $N^\ubul$ evaluated at these points has nontrivial $k$-th hypercohomology on $X$. In particular, we need to show that $\bV^k(\bR p_*(N^\ubul))$ does not contain the line $\eps=1$ for $k\le i$ and for $k\ge 2n-i$.

However, $\bV^k(\bR p_*(N^\ubul))$ is invariant under the diagonal $\bC^*$-action on $\bC^2$.  Under this action, the line $\eps=1$ sweeps out  all of $\bC^2\setminus\{\eps=0\}$. Therefore, since $\bV^k(\bR p_*(N^\ubul))$ must be closed in $\bC^2$, we only need to show that $\bV^k(\bR p_*(N^\ubul))$ is not equal to the whole $\bC^2$ for $k\le i$ and for $k\ge 2n-i$. Thus, we only need to show that the point $(\epsilon, t)=(0,1)$ is not contained in $\bV^k(\bR p_*(N^\ubul))$, which follows from the next lemma. 

\begin{lemma}
With assumptions as in Theorem \ref{thrm2vs}, we have:

(i) $\cH^k(\Omega_X^\ubul(\log D)\otimes_{\cO_X}\cL_\rho,w)=0$ for $k\le i$;

(ii) $H^p(X,\cH^q(\Omega_X^\ubul(\log D)\otimes_{\cO_X}\cL_\rho,w))=0$ for $p\ge n-i$.
\end{lemma}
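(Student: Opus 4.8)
The statement is local: the complex $(\Omega_X^\ubul(\log D)\otimes_{\cO_X}\cL_\rho,w)$ is a complex of locally free $\cO_X$-sheaves, and after a choice of local trivialization $\cL_\rho\cong\cO_X$ the differential is literally wedging with the holomorphic $1$-form $w$ (with logarithmic poles along $D$). So the first thing I would do is pass to an analytic or \'etale neighborhood of an arbitrary point $x\in X$ in which $D$ is defined by $z_1\cdots z_r=0$ and $w=\sum_{j=1}^r a_j\,dz_j/z_j+\sum_{j=r+1}^n a_j\,dz_j$ with $a_j\in\cO_{X,x}$. Clearing denominators, the Koszul-type complex $(\Omega_X^\ubul(\log D),w)$ is isomorphic, up to twisting each term by a fixed power of $\cO(-D)$ that does not affect exactness, to the ordinary Koszul complex on $\cO_{X,x}$ given by the ideal generated by the entries of $w$ in the frame $dz_1/z_1,\dots,dz_r/z_r,dz_{r+1},\dots,dz_n$; call these entries $f_1,\dots,f_n$. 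The scheme they cut out is exactly $Z(w)$ near $x$.

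For part (i): by the standard depth-sensitivity of the Koszul complex (see e.g. Matsumura or Eisenbud), if $f_1,\dots,f_n$ form a sequence of depth $\ge d$ on $\cO_{X,x}$ then $\cH^k$ of the Koszul complex vanishes for $k<d$, equivalently $\cH^k$ vanishes for all $k$ with $n-k>\dim(Z(w))_x$, i.e. for $k< n-\dim Z(w)=\codim_X Z(w)$. The hypothesis $\codim_X Z(w)>i$ then gives $\cH^k=0$ for $k\le i$. The one point needing care is that $X$ is smooth (hence $\cO_{X,x}$ Cohen--Macaulay) so that "depth" and "codimension of the zero locus" agree; this is where smoothness of $X$ enters. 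Globalizing this stalkwise computation gives (i).

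For part (ii): here I would use that $\cH^q:=\cH^q(\Omega_X^\ubul(\log D)\otimes_{\cO_X}\cL_\rho,w)$ is a coherent sheaf supported on $Z(w)$. Indeed, wherever $w$ is nowhere-vanishing in the log frame — i.e. away from $Z(w)$ — the Koszul complex is exact, so $\Supp(\cH^q)\subseteq Z(w)$ for every $q$. Since $\dim Z(w)=n-\codim_X Z(w)<n-i$, Grothendieck vanishing for coherent sheaves on a Noetherian space of dimension $<n-i$ kills $H^p(X,\cH^q)$ for $p\ge n-i$, which is exactly (ii). The main obstacle — really the only subtle step — is the bookkeeping in the reduction of the first paragraph: one must check that clearing the poles and twisting by powers of $\sO_X(-D)$ genuinely produces the Koszul complex of a length-$n$ sequence whose vanishing scheme is $Z(w)$ (with the correct codimension), uniformly in a neighborhood of each point of $D$, so that the depth/support arguments can be applied stalkwise and then sheafified. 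Once that identification is in place, both (i) and (ii) are immediate consequences of commutative-algebra facts about Koszul complexes and Grothendieck vanishing, with no Hodge theory needed.
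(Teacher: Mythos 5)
Your proof is correct and follows essentially the same route as the paper: identify the complex locally as the Koszul complex of the section $w$ of the rank-$n$ bundle $\Omega_X^1(\log D)$ cutting out $Z(w)$, use depth-sensitivity plus Cohen--Macaulayness of the smooth $X$ for (i), and use that the cohomology sheaves are supported on $Z(w)$ together with Grothendieck vanishing for (ii). (The only cosmetic remark: no ``clearing of denominators'' or twist by $\cO_X(-D)$ is needed, since $dz_1/z_1,\dots,dz_r/z_r,dz_{r+1},\dots,dz_n$ already form a local $\cO_X$-frame of $\Omega_X^1(\log D)$.)
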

\begin{proof} Recall that $\Omega_X^p(\log D)=\bigwedge^p\Omega_X^1(\log D)$. The ideal sheaf defining $Z(w)$ in $X$ is identified with the image of the last map in the complex, 
$$
w\wedge : \Omega_X^{n-1}(\log D)\longrightarrow \Omega_X^n(\log D)\cong \cO_X.
$$
Hence $$(\Omega_X^\ubul(\log D)\otimes_{\cO_X}\cL_\rho,w)=(\Omega_X^\ubul(\log D),w)\otimes_{\cO_X}\cL_\rho$$ is a Koszul complex for $Z(w)$. Recall that $w$ is a global section of $\Omega^1_X(\log D)$ such that $\codim_X(Z(w))>i$. Thus (i) follows from well-known properties of Koszul complexes since $X$ is smooth. Part (ii) follows from the fact the any cohomology sheaf of the Koszul complex has support in $Z(w)$.
\end{proof}

This finishes the proof of Theorem \ref{thrm2vs}, and hence that of Theorem \ref{thrmMain}.

\bigskip

\bigskip

\end{document}